\numberwithin{equation}{section}
\newtheorem{lema}[equation]{Lemma}
\newtheorem{teo}[equation]{Theorem}
\newtheorem{coro}[equation]{Corollary}
\begin{document}
\title[Hardy spaces for Schr\"odinger operators]{A characterization of Hardy spaces
associated with certain Schr\"odinger operators}
\author{Jacek Dziuba\'{n}ski and  Jacek Zienkiewicz}
\address{Instytut Matematyczny, Uniwersytet Wroc\l awski, pl. Grunwaldzki 2/4, 50-384 Wroc\l aw, Poland} \email{jdziuban@math.uni.wroc.pl, zenek@math.uni.wroc.pl }
\subjclass[2000]{42B30, 35J10 (primary);  42B35 (secondary)}
\keywords{Hardy spaces, Schr\"odinger operators}
\thanks{The research was  supported by Polish funds for sciences, grants:   DEC-2012/05/B/ST1/00672 and DEC-2012/05/B/ST1/00692 from Narodowe Centrum Nauki.}

\maketitle

\begin{abstract}
 Let $\{K_t\}_{t>0}$ be the semigroup of linear operators generated by a
Schr\"o\-dinger operator  $-L=\Delta - V(x)$ on $\mathbb R^d$, $d\geq 3$,  where $V(x)\geq 0$ satisfies $\Delta^{-1} V\in L^\infty$.
We say that an $L^1$-function $f$ belongs to the Hardy space $H^1_L$   if the maximal function $ \mathcal M_L f(x) = \sup_{t>0} |K_tf(x)|$ belongs to $L^1(\mathbb R^d) $. We prove that the operator $(-\Delta)^{1\slash 2} L^{-1\slash 2}$ is an isomorphism of the space $H^1_L$ with the classical Hardy space $H^1(\mathbb R^d)$ whose inverse is $L^{1\slash 2} (-\Delta)^{-1\slash 2}$. As a corollary we obtain that the  space $H^1_L$ is characterized by the Riesz transforms $R_j=\frac{\partial }{\partial x_j}L^{-1\slash 2}$.
\end{abstract}


\section{Introduction and statement of the result}

 Let $K_t(x,y)$ be the integral kernels of the semigroup $\{ K_t\}_{t>0} $ of linear operators
 on $\mathbb R^d$, $d\geq 3$,  generated by a Schr\"odinger operator  $-L=\Delta -V(x)$,  where
$V(x)$ is a  non-negative locally integrable function
which satisfies
\begin{equation}\label{K}
\Delta^{-1} V(x)=-c_d \int_{\mathbb R^d}  \frac{1}{|x-y|^{d-2}} V(y)\, dy \in L^\infty (\mathbb R^d).
\end{equation}
Since $V(x)$ is non-negative, the Fenman-Kac formula implies that
\begin{equation}\label{UG}
 0\leq  K_t(x,y)\leq (4\pi t)^{-d\slash 2} e^{-|x-y|^2\slash 4t}=:P_t(x-y).
\end{equation}
It is known, see \cite{Se}, that for $V(x)\geq 0$ the condition (\ref{K}) is equivalent to the lower Gaussian bounds for  $K_t(x,y)$, that is, there are $c,C>0$ such that
\begin{equation}\label{bounds}
 ct^{-d\slash 2} e^{-C|x-y|^2\slash t}\leq K_t(x,y).
\end{equation}
We say that an $L^1$-function $f$ belongs to the Hardy space $H^1_L$ if the maximal function
$ \mathcal M_L f(x) =\sup_{t>0} |K_tf(x)|$ belongs to $L^1(\mathbb R^d)$. Then we set
$$ \| f\|_{H^1_L}=\| \mathcal M_Lf\|_{L^1(\mathbb R^d)}.$$

The Hardy spaces $H^1_L$ associated with Schr\"odinger operators with nonnegative potentials satisfying (\ref{K}) were studied in \cite{DZ4}. It was proved that the map
$ f(x) \mapsto w(x) f(x)$  is an isomorphism  of $H^1_L$ onto the classical Hardy space $H^1(\mathbb R^d)$, where
\begin{equation}\label{functionw}
w(x)=\lim_{t\to\infty } \int K_t(x,y)\, dy,
 \end{equation}
which in particular means that
\begin{equation}\label{eqwww}
\| fw\|_{H^1(\mathbb R^d)}\sim \| f\|_{H^1_L},
\end{equation}
see \cite[Theorem 1.1]{DZ4}.
 The function $w(x)$ is $L$-harmonic, that is, $ K_tw=w$, and satisfies $0<\delta \leq w(x)\leq 1$.

Let us remark that the classical real Hardy space $H^1(\mathbb R^d)$ can be thought as the space $H^1_{L} $ associated with the classical heat semigroup $e^{t\Delta}$, that is,  $L=-\Delta +V$ with $V\equiv 0$ in this case. Obviously, the constant functions are the only bounded harmonic functions for $\Delta$.

The present paper is a  continuation of \cite{DZ4}. Our goal is to study the mappings
$$L^{1\slash 2}(-\Delta)^{-1\slash 2} \ \ \text{ and } \ \  (-\Delta)^{1\slash 2}L^{-1\slash 2}$$ which turn out to be bounded on $L^1(\mathbb R^d)$ (see Lemma \ref{lemma1}). Our main result is  the following  theorem, which states another characterization of $H^1_L$.

\begin{teo}\label{main} Assume that  $L=-\Delta +V(x)$ is a Schr\"odinger operator on $\mathbb R^d$, $d\geq 3$, with a locally integrable non-negative potential  $V(x)$ satisfying (\ref{K}).
 Then the mapping $f\mapsto (-\Delta)^{1\slash 2}L^{-1\slash 2}f$ is an isomorphism of $H^1_L$ onto the classical Hardy space $H^1(\mathbb R^d)$, that is, there is a constant $C>0$ such that
 \begin{equation}\label{eq1.6}
  \| (-\Delta)^{1\slash 2} L^{-1\slash 2}  f\|_{H^1(\mathbb R^d)} \leq C\| f\|_{H^1_L}, \ \ \
 \end{equation}
 \begin{equation}\label{eq1.7}
  \|  L^{1\slash 2}  (-\Delta)^{-1\slash 2}f\|_{H^1_L} \leq C\| f\|_{H^1 (\mathbb R^d)}.
 \end{equation}
\end{teo}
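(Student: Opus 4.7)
The strategy is an atomic-decomposition argument leveraging the isomorphism $f \mapsto f w$ of $H^1_L$ onto $H^1(\mathbb{R}^d)$ from \cite{DZ4}. Since $0 < \delta \leq w \leq 1$, the atoms of $H^1_L$ can be taken as $g = w^{-1} a$ where $a$ is a classical $(1,\infty)$-atom associated with a ball $B = B(x_0,r)$; such $g$ satisfies the weighted cancellation $\int g\, w\,dy = 0$ rather than $\int g\,dy = 0$. Both (\ref{eq1.6}) and (\ref{eq1.7}) will be established by showing uniform control of the operator on atoms.

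To prove (\ref{eq1.6}) I would invoke the Riesz transform characterization of classical $H^1$: a function $h$ lies in $H^1(\mathbb{R}^d)$ if and only if $h$ and $R_j^{\Delta} h = \partial_{x_j}(-\Delta)^{-1/2} h$ belong to $L^1$ for every $j$. Lemma \ref{lemma1} supplies the $L^1$ bound for $(-\Delta)^{1/2} L^{-1/2}$, and the identity
\begin{equation*}
R_j^{\Delta}\bigl((-\Delta)^{1/2} L^{-1/2} f\bigr) = \partial_{x_j} L^{-1/2} f
\end{equation*}
reduces matters to the boundedness of the $L$-Riesz transform $\partial_{x_j} L^{-1/2} : H^1_L \to L^1$. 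For an atom $g = w^{-1} a$ with $\supp g \subset B$, I would split $\|\partial_{x_j} L^{-1/2} g\|_{L^1}$ into an integral over $2B$ (handled via $L^2$-boundedness of $\partial_{x_j} L^{-1/2}$, which follows from $-\Delta \leq L$, and Cauchy--Schwarz) and an integral over $(2B)^c$ (handled by kernel estimates). Representing $L^{-1/2}(x,y) = \pi^{-1/2}\int_0^\infty K_t(x,y) t^{-1/2}\,dt$ and using the Gaussian upper bound (\ref{UG}) would give pointwise control of the kernel; the weighted cancellation is then exploited by factoring $\partial_{x_j} L^{-1/2}(x,y) = w(y)\widetilde{K}_j(x,y)$ and using regularity of $\widetilde{K}_j$ in $y$.

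For (\ref{eq1.7}), the equivalence (\ref{eqwww}) converts the claim into $\|w L^{1/2}(-\Delta)^{-1/2} f\|_{H^1(\mathbb{R}^d)} \leq C\|f\|_{H^1(\mathbb{R}^d)}$, and atomic decomposition in the classical space reduces this to a molecular estimate for $w L^{1/2}(-\Delta)^{-1/2} a$ on classical atoms $a$. I would decompose
\begin{equation*}
L^{1/2}(-\Delta)^{-1/2} = I + L^{1/2}\bigl((-\Delta)^{-1/2} - L^{-1/2}\bigr)
\end{equation*}
and use the resolvent-based perturbation identity
\begin{equation*}
L^{-1/2} - (-\Delta)^{-1/2} = -\pi^{-1}\int_0^\infty (L+s)^{-1} V (-\Delta+s)^{-1}\, s^{-1/2}\,ds,
\end{equation*}
which localizes the correction around $\supp V$; the hypothesis $\Delta^{-1}V \in L^\infty$ together with the $L$-harmonicity $K_t w = w$ should then yield the required pointwise molecular decay.

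The main obstacles I foresee are the kernel analysis of $\partial_{x_j} L^{-1/2}$ against the classical Riesz kernel, where the error has to be controlled through the assumption on $V$ and the weighted cancellation of the atom forces one to pull the factor $w(y)$ out of the kernel before applying any smoothness estimate; and, in the reverse direction, the fact that multiplication by the merely bounded function $w$ does not of itself preserve $H^1$-structure, so the perturbation formula must cooperate with the $L$-harmonicity of $w$ to produce a genuine molecule and not just an $L^1$ function.
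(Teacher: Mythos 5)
Your architecture is reasonable in outline, but the two steps that carry all the difficulty are asserted rather than proved, and in both cases the assertion is exactly where the paper either works hardest or deliberately goes another way. For (\ref{eq1.6}) you reduce to the $H^1_L\to L^1$ boundedness of $\partial_{x_j}L^{-1/2}$ and propose the standard local/far splitting on atoms, with the far part handled by ``factoring $\partial_{x_j}L^{-1/2}(x,y)=w(y)\widetilde K_j(x,y)$ and using regularity of $\widetilde K_j$ in $y$.'' Under the sole hypothesis $\Delta^{-1}V\in L^\infty$, with $V$ merely locally integrable, such a factorization with quantitative H\"older regularity and far-field decay is not available off the shelf; establishing it would require essentially the same multi-region analysis of $\int_0^\infty\int_0^t P_{t-s}VK_s\,ds\,dt/t^{3/2}$ that occupies the paper's Section 4, now with an extra gradient on the Gaussian factor. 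The paper avoids differentiated kernels entirely: it writes $(-\Delta)^{1/2}L^{-1/2}=I+c_2W$ via the perturbation formula, observes that $W(x,u)\ge 0$ with $\int W(x,u)\,dx\le 2c_1^{-1}\|\Delta^{-1}V\|_{L^\infty}$ (Lemma \ref{lemma1}), and then shows that $(-\Delta)^{1/2}L^{-1/2}\mathbf a$ is a classical molecule for the generalized atoms $\mathbf a=(I-K_{r^2})\mathbf b$ of \cite{DZ1}. The choice of these atoms matters: Lemma \ref{lemma3.7} gives the pointwise decay $|L^{-1/2}\mathbf a(y)|\le Cr^{1-d}(1+|y-y_0|/r)^{-d}$ that feeds every subsequent estimate, and no analogous decay is available for your atoms $g=w^{-1}a$. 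Note also that your route proves Corollary \ref{corollary} first and derives the theorem from it, reversing the paper's logic; that is legitimate only if the Riesz transform bound on $H^1_L$ can be proved independently, which is not easier than the theorem itself.

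For (\ref{eq1.7}) your plan (identity plus a $V$-localized correction, then a molecular estimate) does match the paper's in spirit, and your resolvent formula for $L^{-1/2}-(-\Delta)^{-1/2}$ is a correct variant of the heat-semigroup perturbation formula used there. But the entire content of the estimate is to control $L^{1/2}$ applied to that correction with enough spatial decay to produce a $(1,q,\varepsilon,w)$-molecule, and your sketch stops at ``should then yield the required pointwise molecular decay.'' In the paper this is the decomposition $J=J_1+J_2+J_3$ over the regions $t<r^2$, then $t>r^2$ with $s<t/2$, then $t>r^2$ with $s>t/2$, each piece estimated in weighted $L^q$ via (\ref{eq41})--(\ref{eq4.5}) together with (\ref{eq4.3}), which is where $\Delta^{-1}V\in L^\infty$ enters quantitatively. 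You also need the cancellation $\int \big(L^{1/2}(-\Delta)^{-1/2}a\big)(x)\,w(x)\,dx=0$ for the molecule; this is not automatic and is exactly Lemma \ref{lemma2}, proved by integrating the perturbation formula against $w$ and using $K_tw=w$ together with (\ref{eq5}). As written, the proposal identifies the right skeleton but leaves both load-bearing estimates unproved.
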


As a corollary we immediately obtain the following Riesz transform characterization of $H^1_L$.
\begin{coro}\label{corollary} Under the assumptions of Theorem \ref{main}
 an $L^1$-function $f$ belongs to the space $H^1_L$ if and only if $R_jf=\frac{\partial}{\partial x_j} L^{-1\slash 2} f$ belong to $L^1(\mathbb R^d)$ for $j=1,2,...,d$. Moreover, there is a constant $C>0$ such that
 \begin{equation}\label{eq333}
  C^{-1} \| f\|_{H^1_L}\leq \| f\|_{L^1(\mathbb R^d)} +\sum_{j=1}^d\| R_jf\|_{L^1(\mathbb R^d)} \leq
  C \| f\|_{H^1_L}.
 \end{equation}
\end{coro}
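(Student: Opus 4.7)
The plan is to deduce the corollary directly from Theorem \ref{main} by transferring the classical Fefferman--Stein Riesz transform characterization of $H^1(\mathbb R^d)$ across the isomorphism $(-\Delta)^{1\slash 2}L^{-1\slash 2}:H^1_L\to H^1(\mathbb R^d)$. The key observation is the formal identity
\[
 \frac{\partial}{\partial x_j}L^{-1\slash 2}
 \;=\; \Bigl(\frac{\partial}{\partial x_j}(-\Delta)^{-1\slash 2}\Bigr)\circ (-\Delta)^{1\slash 2}L^{-1\slash 2},
\]
valid on a convenient dense subspace (say, on Schwartz functions, or on $L^1\cap L^2$, where all the operators involved are realized as spectral/Fourier multipliers), together with its inverse version where the roles of $(-\Delta)$ and $L$ are swapped.

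For the forward implication, assume $f\in H^1_L$. Then $f\in L^1(\mathbb R^d)$ by the definition of $H^1_L$, and Theorem \ref{main} gives $g:=(-\Delta)^{1\slash 2}L^{-1\slash 2}f\in H^1(\mathbb R^d)$ with $\|g\|_{H^1}\leq C\|f\|_{H^1_L}$. The classical Riesz transform characterization of $H^1(\mathbb R^d)$ yields $\tilde R_j g=\frac{\partial}{\partial x_j}(-\Delta)^{-1\slash 2}g\in L^1$ with $\|\tilde R_j g\|_{L^1}\leq C\|g\|_{H^1}$. Composing, $\tilde R_j g=R_j f$, so $R_j f\in L^1$ and $\|R_j f\|_{L^1}\leq C\|f\|_{H^1_L}$, which proves the right-hand inequality of (\ref{eq333}).

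For the reverse implication, assume $f\in L^1$ with $R_j f\in L^1$ for all $j$. By Lemma \ref{lemma1} the operator $(-\Delta)^{1\slash 2}L^{-1\slash 2}$ is bounded on $L^1$, so $g:=(-\Delta)^{1\slash 2}L^{-1\slash 2}f\in L^1$ with $\|g\|_{L^1}\leq C\|f\|_{L^1}$. The same commutation identity shows that the classical Riesz transforms of $g$ are exactly $\tilde R_j g=R_j f\in L^1$. The Fefferman--Stein theorem then places $g$ in $H^1(\mathbb R^d)$ with
\[
 \|g\|_{H^1(\mathbb R^d)}\;\leq\; C\Bigl(\|g\|_{L^1}+\sum_{j=1}^d\|\tilde R_j g\|_{L^1}\Bigr)\;\leq\; C\Bigl(\|f\|_{L^1}+\sum_{j=1}^d\|R_j f\|_{L^1}\Bigr).
\]
Applying the inverse part of Theorem \ref{main} (inequality (\ref{eq1.7})) to $g$ gives $L^{1\slash 2}(-\Delta)^{-1\slash 2}g=f\in H^1_L$ with $\|f\|_{H^1_L}\leq C\|g\|_{H^1}$, yielding the left-hand inequality of (\ref{eq333}).

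The only non-routine point is the justification of the algebraic identity $\partial_{x_j}L^{-1\slash 2}=\bigl(\partial_{x_j}(-\Delta)^{-1\slash 2}\bigr)\circ(-\Delta)^{1\slash 2}L^{-1\slash 2}$ (and its swapped version) as bounded operators on $L^1$; this is where the main technical care is needed. One argues it first on $L^2$ via the spectral theorem (or on Schwartz functions by subordination formulas $\Lambda^{-1\slash 2}=c\int_0^\infty t^{-1\slash 2}e^{-t\Lambda}\,dt$), and then extends to $L^1$ using the $L^1$-boundedness granted by Lemma \ref{lemma1} together with the $L^1$-boundedness of the classical Riesz transforms on the Hardy space $H^1(\mathbb R^d)$. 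Once this identification is in place, everything else is immediate from Theorem \ref{main} and the classical characterization of $H^1(\mathbb R^d)$.
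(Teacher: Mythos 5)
Your proposal is correct and follows essentially the same route as the paper: transfer the classical Riesz transform characterization of $H^1(\mathbb R^d)$ through the isomorphism of Theorem \ref{main}, using the cancellation $\partial_{x_j}L^{-1\slash 2}=\partial_{x_j}(-\Delta)^{-1\slash 2}\circ(-\Delta)^{1\slash 2}L^{-1\slash 2}$ in one direction and Lemma \ref{lemma1} for the $L^1$-boundedness of $g=(-\Delta)^{1\slash 2}L^{-1\slash 2}f$ in the other. The only cosmetic difference is that in the forward implication the paper writes $f=L^{1\slash 2}(-\Delta)^{-1\slash 2}g$ and cancels $L^{-1\slash 2}L^{1\slash 2}$, whereas you cancel $(-\Delta)^{-1\slash 2}(-\Delta)^{1\slash 2}$; both rest on the same composition identities.
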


\noindent{\bf Example 1.}
It is not hard to see that if for a function $V(x) \geq 0 $ defined on $\mathbb R^d$, $d\geq 3$, there is $\varepsilon >0$ such that $V\in L^{d\slash 2-\varepsilon}(\mathbb R^d)\cap L^{d\slash 2+\varepsilon}(\mathbb R^d)$, then
$V$ satisfies (\ref{K}).

\noindent{\bf Example 2.} Assume that (\ref{K}) holds for a function $V:\mathbb R^d\to [0,\infty)$, $d\geq 3$. Then $V(x_1,x_2):= V(x_1)$ defined on $\mathbb R^d\times \mathbb R^n$, $n\geq 1$,  fulfils (\ref{K}).

\

The reader interested in other results concerning Hardy spaces associated with semigroups of linear operators,
and in particular semigroups generated by Schr\"o\-din\-ger operators,
is referred to \cite{ADM}, \cite{BZ}, \cite{CZ}, \cite{DY}, \cite{DGMTZ}, \cite{DZ}, \cite{DZ5}, \cite{Hof}.

\section{Boundedness on $L^1$}
We define the operators:
\begin{equation}\begin{split}\nonumber
   & (-\Delta)^{-1} f(x)= \int_0^\infty P_tf(x)\, dt
   =c_d\int \frac{f(y)}{|x-y|^{d-2}}\, dy =:\int \Gamma_0(x-y)f(y)\, dy, \\
   & L^{-1}f(x)=\int_0^\infty K_tf(x)\, dt =: \int \Gamma(x,y)f(y)\, dy,\\
    & (-\Delta)^{-1\slash 2} f= c_1 \int_0^\infty P_tf\,\frac{ dt}{\sqrt{t}}
   =c'_d\int \frac{1}{|x-y|^{d-1}}f(y)\, dy=:\int\widetilde\Gamma_0(x-y)f(y)\, dy ,  \\
   & L^{-1\slash 2} f= c_1\int_0^\infty K_tf\,\frac{ dt}{\sqrt{t}}
   =:\int \widetilde \Gamma (x,y)f(y)\, dy,\\
\end{split}\end{equation}
where $ c_1=\Gamma (1\slash 2)^{-1}.$
 Clearly,
 \begin{equation}\begin{split}\label{eq22.1}
  0\leq \tilde \Gamma (x,y)\leq c_d'|x-y|^{-d+1}, \ \ \ 0< \Gamma(x,y)\leq c_d|x-y|^{-d+2}.
\end{split}\end{equation}

The perturbation formula asserts that
\begin{equation}\begin{split}\label{eq1}
 P_t(x-y)&=K_t(x,y)+\int_0^t \int P_{t-s}(x-z)V(z)K_s(z,y)\, dz\, ds\\
&=K_t(x,y)+\int_0^t \int K_{t-s}(x,z)V(z)P_s(z-y)\, dz\, ds.\\
 \end{split}\end{equation}

Multiplying the second inequality in  (\ref{eq1}) by $w(x)$ and integrating with respect to $dx$ we get
\begin{equation}\label{eq2.3}
 \int P_t(x-y)w(x)\, dx = w(y) +\int_{\mathbb R^d} \int_0^t w(z)V(z) P_s(z,y)\, ds \, dx,
\end{equation}
since $w$ is $L$-harmonic.
 The left-hand side of (\ref{eq2.3}) tends to a harmonic function, which is bounded from below by $\delta$ and above by 1, as $t$ tends to infinity. Thus there is  a constant  $0< c_w\leq 1$ such that
\begin{equation}\label{eq5}
 c_w= w(y) +\int_{\mathbb R^d} w(z)V(z) \Gamma_0 (z-y)\, dz.
\end{equation}
Similarly, integrating the first equation
in (\ref{eq1}) with respect to $x$  and taking limit as $t$ tends to infinity,  we get
\begin{equation}\label{eq51}
1= w(y)+ \int_{\mathbb R^d}V(z)\Gamma (z,y)\, dz.
\end{equation}

 For a reasonable function $f$ the following operators are well defined in the sense of distributions:
 \begin{equation}\begin{split}\nonumber
   &(-\Delta)^{1\slash 2} f=c_2\int_0^\infty  (P_tf-f)\frac{dt}{t^{3\slash 2}}, \  c_2=\Gamma (-1\slash 2)^{-1},\\
   &L^{1\slash 2} = c_2\int_0^\infty  (K_tf-f)\frac{dt}{t^{3\slash 2}}.\\
\end{split}\end{equation}

\begin{lema}\label{lemma1}
 There is a constant $C>0$ such that
   \begin{equation}\label{eq2.51}
   \| (-\Delta)^{1\slash 2}L^{-1\slash 2}f\|_{L^1}\leq C\| f\|_{L^1},
   \end{equation}
   \begin{equation}\label{eq2.52}
   \|  L^{1\slash 2}(-\Delta)^{-1\slash 2} f\|_{L^1} \leq C\| f\|_{L^1}.
   \end{equation}
\end{lema}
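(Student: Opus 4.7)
\emph{My plan is to reduce both estimates to Schur's test applied to a single positive kernel.} The starting point is the pair of algebraic identities
\begin{equation*}
L^{1/2}(-\Delta)^{-1/2} = I + [L^{1/2} - (-\Delta)^{1/2}](-\Delta)^{-1/2}, \qquad
(-\Delta)^{1/2} L^{-1/2} = I - [L^{1/2} - (-\Delta)^{1/2}] L^{-1/2},
\end{equation*}
so it suffices to show that the two correction operators on the right are bounded on $L^1$.

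My next step is to represent $L^{1/2} - (-\Delta)^{1/2}$ as a positive kernel operator. Combining the subordination formula (with $c_2 = -1/(2\sqrt{\pi})$), the second form of the perturbation identity \eqref{eq1}, namely $P_t - K_t = \int_0^t K_{t-s} V P_s\, ds$, and the substitution $u = t - s$ gives
\begin{equation*}
L^{1/2} - (-\Delta)^{1/2} = \frac{1}{2\sqrt{\pi}} \int_0^\infty \int_0^\infty K_u V P_s\, \frac{du\, ds}{(u+s)^{3/2}}.
\end{equation*}
Composing with $(-\Delta)^{-1/2}$ on the right and using that $(-\Delta)^{-1/2}$ commutes with $P_s$, the resulting kernel at $(x,y)$ is the iterated integral of $K_u(x,z)\, V(z)\, (P_s * \widetilde{\Gamma}_0)(z - y)$ against $(u+s)^{-3/2}\, du\, ds$. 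Composing instead with $L^{-1/2}$ gives an analogous kernel with $\widetilde{\Gamma}(w, y)$ in place of $\widetilde{\Gamma}_0(w - y)$; by \eqref{eq22.1} the former is pointwise dominated by the latter, so its absolute value is majorized by the same expression.

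For Schur's test I integrate in $x$, using $\int K_u(x, z)\, dx \leq \int P_u(x - z)\, dx = 1$ (a consequence of \eqref{UG}), and I am left to check that
\begin{equation*}
\int_0^\infty \int_0^\infty \int V(z)\, (P_s * \widetilde{\Gamma}_0)(z - y)\, dz\, \frac{du\, ds}{(u+s)^{3/2}}
\end{equation*}
is bounded uniformly in $y$. The $u$-integral evaluates to $2 s^{-1/2}$. Using the subordination representation $\widetilde{\Gamma}_0 = c_1 \int_0^\infty P_t\, t^{-1/2}\, dt$ one gets $P_s * \widetilde{\Gamma}_0 = c_1 \int_s^\infty P_{t'}\,(t' - s)^{-1/2}\, dt'$. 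Swapping the $s$ and $t'$ integrations and invoking the Beta-function identity $\int_0^{t'} s^{-1/2}(t' - s)^{-1/2}\, ds = \pi$ reduces everything to $2\pi c_1 \int_0^\infty \int V(z) P_{t'}(z - y)\, dz\, dt' = 2\pi c_1 \int V(z) \Gamma_0(z - y)\, dz = -2\pi c_1\, \Delta^{-1} V(y)$, which lies in $L^\infty$ by \eqref{K}. Schur's test then yields \eqref{eq2.51} and \eqref{eq2.52}.

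The main obstacle is the non-commutativity between $P_s$ and $L^{-1/2}$, which prevents an immediately symmetric computation in the $L^{-1/2}$ case. It is sidestepped by the pointwise domination $\widetilde{\Gamma} \leq \widetilde{\Gamma}_0$ from \eqref{eq22.1}, which reduces both cases to the same free-Gaussian expression. Apart from this, the argument is a succession of positivity manipulations, Fubini interchanges (all integrands are non-negative), and the explicit evaluation of a Beta integral.
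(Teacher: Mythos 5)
Your proof is correct and follows essentially the same route as the paper's: both split off the identity operator, represent the remaining correction via the perturbation formula (\ref{eq1}) as an operator with a non-negative kernel, and establish $L^1$-boundedness by Schur's test, bounding the $x$-integral of that kernel uniformly by a multiple of $\| \Delta^{-1}V\|_{L^\infty}$. The only cosmetic differences are your choice of the other factorization of $P_t-K_t$, the explicit Beta-integral evaluation of $(-\Delta)^{-1\slash 2}\circ(-\Delta)^{-1\slash 2}$ where the paper composes $\widetilde\Gamma$ with itself, and the domination $\widetilde\Gamma\leq\widetilde\Gamma_0$ that lets you handle both operators with a single kernel where the paper just remarks that the second proof ``goes in the same way''.
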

\begin{proof} From the perturbation formula (\ref{eq1}) we get
\begin{equation}\label{eq2.8}
  \begin{split}
   &(-\Delta)^{1\slash 2} L^{-1\slash 2} f (x) = c_2  \int_0^\infty (P_t - I) L^{-1\slash 2} f(x)\frac{dt}{t^{3\slash 2}}\\
   & = c_2 \int_0^\infty (P_t - K_t) L^{-1\slash 2} f(x)\frac{dt}{t^{3\slash 2}} +
    c_2\int_0^\infty (K_t - I) L^{-1\slash 2} f(x)\frac{dt}{t^{3\slash 2}}\\
    & = c_2\int_0^\infty \int_0^t \iint P_{t-s}(x-z)V(z)K_s(z,y) L^{-1\slash 2}f(y) \, dy\, dz\, ds \frac{dt}{t^{3\slash 2}}  +f(x). \\
  \end{split}
 \end{equation}
 Consider the integral kernel $W(x,u)$ of the operator
 $$f\mapsto \int_0^\infty \int_0^t \iint P_{t-s}(x-z)V(z)K_s(z,y) L^{-1\slash 2}f(y) \, dy\, dz\, ds \frac{dt}{t^{3\slash 2}},$$ that is,
 $$ W(x,u)=\int_0^\infty \int_0^t \iint P_{t-s}(x-z)V(z)K_s(z,y) \tilde \Gamma (y,u) \, dy\, dz\, ds \frac{dt}{t^{3\slash 2}}.$$
 Clearly $0\leq W(x,u)$. Integration of  $W(x,u)$   with respect to $dx$  leads to
 \begin{equation}\begin{split}\label{eq2.88}
\int W(x,u)\, dx & = \int_0^\infty \int_0^t \iint V(z)K_s(z,y) \tilde \Gamma (y,u) \, dy\, dz\, ds \frac{dt}{t^{3\slash 2}}\\
&=  2 \int_0^\infty  \iint V(z)K_s(z,y) \widetilde\Gamma (y,u) \, dy\, dz\,
\frac{ds}{\sqrt{s}} \\
&\leq 2c_1^{-1} \iint V(z)\widetilde\Gamma (z,y) \widetilde\Gamma (y,u)\, dy\, dz \\
&= 2c_1^{-1} \int V(z) \Gamma (z,u)  dz.\\
 \end{split}
 \end{equation}
 Using (\ref{eq22.1}) we see that
 $ \int W(x,u)\, dx \leq 2c_1^{-1} \| \Delta^{-1} V\|_{L^\infty},$
 which completes the proof of (\ref{eq2.51}).
 The proof of (\ref{eq2.52}) goes in the same way. We skip the details. 
\end{proof}

We finish this section by proving the following two lemmas, which will be used in the sequel.

\begin{lema}\label{lemma3}
 Assume that $f\in L^1(\mathbb R^d)$. Then
 \begin{equation}
  \int (-\Delta)^{1\slash 2} L^{-1\slash 2} f (x)\, dx = \int f(x)w(x)\, dx.
 \end{equation}
\end{lema}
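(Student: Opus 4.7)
The plan is to start from the representation of $(-\Delta)^{1/2}L^{-1/2}f$ already derived in (\ref{eq2.8}), integrate both sides in $x$, and then exploit the explicit formula for $\int W(x,u)\,dx$ together with the identity (\ref{eq51}) expressing $1-w(u)$ as $\int V(z)\Gamma(z,u)\,dz$.

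First, I would rewrite (\ref{eq2.8}) in the form
\begin{equation*}
(-\Delta)^{1/2}L^{-1/2}f(x) \;=\; f(x) \;+\; c_2 \int W(x,u)\,f(u)\,du,
\end{equation*}
where $W(x,u)$ is the nonnegative kernel displayed in the proof of Lemma \ref{lemma1}. Integration over $x\in\mathbb R^d$ combined with Fubini's theorem (applied separately to the $f(x)\,dx$ term, which is integrable by assumption, and to the $W$-term, which is absolutely integrable against $|f|$ because Lemma \ref{lemma1} and the computation (\ref{eq2.88}) give $\int W(x,u)\,dx\in L^\infty_u$) yields
\begin{equation*}
\int (-\Delta)^{1/2}L^{-1/2}f(x)\,dx \;=\; \int f(x)\,dx \;+\; c_2 \int f(u)\Bigl(\int W(x,u)\,dx\Bigr) du.
\end{equation*}

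Next I would invoke the equality established in (\ref{eq2.88}), namely $\int W(x,u)\,dx = 2 c_1^{-1} \int V(z)\Gamma(z,u)\,dz$, and then substitute (\ref{eq51}) to get $\int V(z)\Gamma(z,u)\,dz = 1-w(u)$. This gives
\begin{equation*}
\int (-\Delta)^{1/2}L^{-1/2}f(x)\,dx \;=\; \int f(x)\,dx \;+\; \tfrac{2c_2}{c_1}\int f(u)\bigl(1-w(u)\bigr)\,du.
\end{equation*}
The conclusion will then follow from the elementary identity $\tfrac{2c_2}{c_1} = -1$, which I would verify from $c_1=\Gamma(1/2)^{-1}=\pi^{-1/2}$ and $c_2=\Gamma(-1/2)^{-1}=-(2\sqrt{\pi})^{-1}$. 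The two terms $\int f\,dx$ cancel, leaving exactly $\int f(u)\,w(u)\,du$, as required.

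The main obstacle I anticipate is the bookkeeping rather than any deep analytic step: ensuring the constants $c_1,c_2$ are tracked with the correct signs, and justifying Fubini across the multiple time--space integrals hidden inside $W$. Since $W\geq 0$ and $\int W(x,u)\,dx$ has already been shown to be bounded uniformly in $u$, Tonelli handles the absolute convergence, so the interchange is legitimate as soon as $f\in L^1$. No smoothness or cancellation properties of $f$ are used — the lemma is purely an $L^1$-integration identity flowing directly from the perturbation formula and the normalization (\ref{eq51}).
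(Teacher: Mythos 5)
Your proof is correct and follows essentially the same route as the paper: integrate the representation (\ref{eq2.8}) in $x$, use the computation (\ref{eq2.88}) of $\int W(x,u)\,dx$, and convert $\int V(z)\Gamma(z,u)\,dz$ into $1-w(u)$ via (\ref{eq51}), with the constant $2c_2/c_1=-1$ doing the final cancellation. The paper's proof is just a more compressed version of the same computation, leaving the Fubini/Tonelli justification and the constant check implicit.
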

\begin{proof} From (\ref{eq2.8}) and (\ref{eq2.88})  we conclude that
  \begin{equation}\begin{split}\nonumber
   \int (-\Delta)^{1\slash 2} L^{-1\slash 2} f (x)\, dx
  &=c_2 \int\int W(x,u)f(u)\, du dx  + \int f(x)\, dx\\
  & = 2c_2 c_1^{-1} \int V(z)\Gamma(z,u)f(u)\, dz \, du  +  \ \int f(x)\, dx\\
  &= \int (w(u)-1)f(u)\, du + \int f(x)\, dx,\\
  \end{split}\end{equation}
  where in the last equality we have used  (\ref{eq51}). 
\end{proof}

\begin{lema}\label{lemma2}
 Assume that $f\in L^1(\mathbb R^d)$. Then
 \begin{equation}
  \int (L^{1\slash 2} (-\Delta )^{-1\slash 2} f)(x)w(x)\, dx = c_w\int f(x)\, dx.
 \end{equation}
\end{lema}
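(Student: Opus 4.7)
The plan is to mirror the proof of Lemma \ref{lemma3} with the roles of $P_t$ and $K_t$ (and of $(-\Delta)^{\pm 1/2}$ and $L^{\pm 1/2}$) swapped, and integrating against the weight $w(x)\,dx$ instead of $dx$. The key new ingredient, replacing the trivial fact $\int P_{t-s}(x-z)\,dx=1$ used in Lemma \ref{lemma3}, will be the $L$-harmonicity of $w$, namely $\int w(x)K_{t-s}(x,z)\,dx=w(z)$.

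First I would derive the analogue of (\ref{eq2.8}):
\begin{equation*}
L^{1/2}(-\Delta)^{-1/2}f(x)=f(x)-c_2\int W'(x,u)f(u)\,du,
\end{equation*}
with
\begin{equation*}
W'(x,u)=\int_0^\infty\!\!\int_0^t\!\!\iint K_{t-s}(x,z)V(z)P_s(z-y)\widetilde\Gamma_0(y-u)\,dy\,dz\,ds\,\frac{dt}{t^{3/2}}.
\end{equation*}
This follows by writing $L^{1/2}(-\Delta)^{-1/2}f=c_2\int_0^\infty(K_t-I)(-\Delta)^{-1/2}f\,dt/t^{3/2}$, splitting $K_t-I=(K_t-P_t)+(P_t-I)$, recognising the second piece as $(-\Delta)^{1/2}(-\Delta)^{-1/2}f=f$, and applying the version $P_t-K_t=\int_0^t K_{t-s}VP_s\,ds$ of the perturbation formula (\ref{eq1}) to the first piece.

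Next, I would compute $\int w(x)W'(x,u)\,dx$ by the same chain of integrations as in (\ref{eq2.88}). The $L$-harmonicity of $w$ produces a factor $w(z)$, the $t$-integration gives $\int_s^\infty t^{-3/2}dt=2/\sqrt s$, the relation $c_1\int_0^\infty P_s(z-y)\,ds/\sqrt s=\widetilde\Gamma_0(z-y)$ handles the $s$-integration, and the convolution identity $(-\Delta)^{-1/2}(-\Delta)^{-1/2}=(-\Delta)^{-1}$, i.e.\ $\int\widetilde\Gamma_0(z-y)\widetilde\Gamma_0(y-u)\,dy=\Gamma_0(z-u)$, collapses the $y$-integration. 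This should yield
\begin{equation*}
\int w(x)W'(x,u)\,dx=2c_1^{-1}\int w(z)V(z)\Gamma_0(z-u)\,dz=2c_1^{-1}(c_w-w(u)),
\end{equation*}
the last equality being (\ref{eq5}).

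Combining the two displays,
\begin{equation*}
\int L^{1/2}(-\Delta)^{-1/2}f(x)\,w(x)\,dx=\int w(x)f(x)\,dx-2c_2c_1^{-1}\int(c_w-w(u))f(u)\,du,
\end{equation*}
and the normalisation $2c_2/c_1=-1$ (the same sign convention that is implicit in the passage from $2c_2c_1^{-1}\int V(z)\Gamma(z,u)\,dz$ to $w(u)-1$ in Lemma \ref{lemma3}) collapses the right-hand side to $c_w\int f(x)\,dx$, as desired. I do not expect a serious obstacle: the Fubini exchanges are justified by the $L^1$-bounds in Lemma \ref{lemma1}, and the only genuine subtlety is remembering to invoke $L$-harmonicity of $w$ at the correct step and to track the sign of $c_2/c_1$.
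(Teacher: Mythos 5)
Your proposal is correct and follows essentially the same route as the paper's own proof: split $K_t-I=(K_t-P_t)+(P_t-I)$, apply the perturbation formula to the first piece, use the $L$-harmonicity of $w$ to produce $w(z)$, carry out the $t$- and $s$-integrations to reach $\int w(z)V(z)\Gamma_0(z-u)\,dz=c_w-w(u)$ via (\ref{eq5}), and cancel using $2c_2/c_1=-1$. The only difference is cosmetic (you package the kernel as $W'(x,u)$ in analogy with Lemma \ref{lemma1}, while the paper writes the integrals out directly), so no further comment is needed.
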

\begin{proof} The proof is similar to that of Lemma \ref{lemma3}.
 Indeed, by the perturbation formula (\ref{eq1}) we have
\begin{equation}\nonumber
 \begin{split}
  & \int  (L^{1\slash 2} (-\Delta )^{-1\slash 2} f)(x)w(x)\, dx \\
  &=c_2\int\int_0^\infty (K_t-P_t)((-\Delta)^{-1\slash 2})f)(x)\frac{dt}{t^{3\slash 2}} w(x)\, dx\\
  &\ \ +
   c_2\int\int_0^\infty (P_t-I)((-\Delta)^{-1\slash 2})f)(x)\frac{dt}{t^{3\slash 2}} w(x)\, dx\\
  &= -c_2\int  \int_0^\infty \int_0^t \iint  w(x) K_{t-s}(x,z)V(z)\\
  &\hskip3cm \times P_s(z- y) ((-\Delta)^{-\frac{1}{2}} f)(y)\, dy dz\, ds \frac{dt}{t^{3\slash 2}} \, dx\\
   & \ \ + \int w(x)f(x)\, dx\\
 &= -c_2 \int_0^\infty \int_0^t \int_{\mathbb R^d} \int_{\mathbb R^d} w(z)V(z)P_s(z-y)((-\Delta)^{-1\slash 2} f)(y)\, dy dz\, ds \frac{dt}{t^{3\slash 2}}\\
 &\ \ +\int w(x)f(x)\, dx,
 \end{split}\end{equation}
 where in the last equality we have used that $w$ is $L$-harmonic. Integrating  with respect to $dt$  and then with respect to $ds$ yields
 \begin{equation}\nonumber
 \begin{split}
  & \int   (L^{1\slash 2} (-\Delta )^{-1\slash 2} f)(x)w(x)\, dx \\
 &= -\frac{2c_2}{ c_1 } \int \int w(z)V(z)\widetilde \Gamma_0(z-y)((-\Delta)^{-1\slash 2} f)(y)\, dy \, dz + \int f(x)w(x)\, dx \\
 &= \int w(z) V(z) \Gamma_0(z-u)f(u)\, du\, dz +\int f(x)w(x)\, dx \\
 &= \int c_w f(x)\, dx - \int w(y) f(y)\, dy +\int f(x) w(x)\, dx,
 \end{split}
\end{equation}
where in the last equality we have used  (\ref{eq5}).
\end{proof}

\section{Atoms and molecules}

Fix $1<q\leq \infty$. We say that a function $a$ is an $(1,q,w)$-atom if there is a ball $B\subset \mathbb R^d$ such that
 $\text{supp}\, a\subset B$, $\| a\|_{L^q(\mathbb R^d)}\leq |B|^{\frac{1}{q}-1}$,
 $\int a(x)w(x)\, dx=0$. The atomic norm $\| f\|_{H^1 {\rm at}, q, w}$ is defined by
\begin{equation}\label{eq3.08} \| f\|_{H^1_{{\rm  at}, q, w}}=\inf\Big\{ \sum_{j=1}^\infty |\lambda_j|\Big\},
\end{equation}
where the infimum is taken over all representations $f=\sum_{j=1}^\infty \lambda_j  a_j$,
where $\lambda_j\in\mathbb C$, $ a_j$ are $(1,q,w)$-atoms.

Clearly, if $w_0(x)\equiv 1$, then the $(1,q,w_0)$-atoms coincide with the classical $(1,q)$-atoms for the Hardy space $H^1(\mathbb R^d)$, which can be thought as $H^1_{-\Delta}$.

As a direct consequence of Theorem 1.1 of \cite{DZ4} (see (\ref{eqwww})) and the results about atomic decompositions of the classical real Hardy spaces (see, e.g., \cite{C}, \cite{Later}, \cite{Stein}),  we obtain that  the space $H^1_L$ admits atomic decomposition into $(1,q,w)$-atoms, that is, there is a constant $C_q>0$ such that
\begin{equation}
C_q^{-1} \| f\|_{H^1_{{\rm  at}, q, w}}\leq \| f\|_{H_L^1}\leq C_q\| f\|_{H^1_{{\rm  at}, q, w}}.
\end{equation}

Let $\varepsilon >0$, $1<q<\infty$. We say that a function $b$ is a $(1,q,\varepsilon, w)$-molecule associated with a ball $B=B(x_0, r)$ if
\begin{equation}\label{eq31}
\Big(\int_B |b(x)|^q\, dx \Big)^{\frac{1}{ q}} \leq |B|^{\frac{1}{q}-1}, \ \ \Big( \int_{2^kB\setminus 2^{k-1}B} |b(x)|^q\, dx\Big)^{\frac{1}{ q}} \leq |2^kB|^{\frac{1}{q}-1} 2^{-\varepsilon k}
\end{equation}
and
\begin{equation}\label{eq33}
 \int b(x)w(x)\, dx =0.
\end{equation}
Obviously every $(1,q,w)$-atom is a $(1,q,\varepsilon, w)$-molecule. It is also not hard to see that for fixed  $q>1$ and $\varepsilon>0$  there is a constant $C>0$ such that every $(1,q,\varepsilon, w)$ molecule $b$  can be decomposed into a  sum
$$ b(x)=\sum_{n=1}^\infty \lambda_n a_n, \ \ \sum_{n=1}^\infty |\lambda_n|\leq C, $$
where  $\lambda_n\in \mathbb C$, $a_n$ are $(1,q,w)$-atoms.

The following lemma is easy to prove.
\begin{lema}\label{lemmaLq}
 Let $1<q<\infty$, $\delta, \varepsilon >0$ be such that $\delta>d(1-\frac{1}{q})+\varepsilon$. Then there is a constant $C>0$ such that if $b(x)$ satisfies (\ref{eq33}) and
 \begin{equation}
 \Big( \int \Big| b(x)\Big(1+\frac{|x-y_0|}{r}\Big)^\delta\Big|^q\, dx \Big)^{1\slash q}\leq \frac{r^{-d+d\slash q}}{C},
 \end{equation}
 then $b$ is a $(1,q,\varepsilon, w)$-molecule associated with $B(y_0,r)$.
\end{lema}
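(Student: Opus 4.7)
The plan is to verify directly the three defining conditions \eqref{eq31}--\eqref{eq33} of a $(1,q,\varepsilon,w)$-molecule associated with $B(y_0,r)$. The vanishing moment condition \eqref{eq33} is already among the hypotheses, so only the two size estimates in \eqref{eq31} require work, and both will follow simply by localizing the weighted $L^q$ bound to the dyadic pieces.

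For the bound on $B=B(y_0,r)$ itself, I would use that for $x\in B$ the weight $(1+|x-y_0|/r)^\delta$ lies between $1$ and $2^\delta$. Hence
\[
\Big(\int_B|b(x)|^q\,dx\Big)^{1/q}\leq\Big(\int\bigl|b(x)(1+|x-y_0|/r)^\delta\bigr|^q\,dx\Big)^{1/q}\leq\frac{r^{-d+d/q}}{C},
\]
which is $\leq |B|^{1/q-1}$ as soon as $C$ exceeds the geometric constant $|B(0,1)|^{1-1/q}$.

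For the annular estimates, the weight is working in our favour: on $2^kB\setminus 2^{k-1}B$ we have $|x-y_0|/r\geq 2^{k-1}$, so $(1+|x-y_0|/r)^\delta\geq 2^{(k-1)\delta}$. Pulling this factor out of the integrand yields
\[
\Big(\int_{2^kB\setminus 2^{k-1}B}|b(x)|^q\,dx\Big)^{1/q}\leq 2^{-(k-1)\delta}\frac{r^{-d+d/q}}{C}.
\]
Since $|2^kB|^{1/q-1}2^{-\varepsilon k}$ is comparable to $2^{-k[d(1-1/q)+\varepsilon]}r^{d/q-d}$, the required bound reduces to $2^{-k\delta}\lesssim 2^{-k[d(1-1/q)+\varepsilon]}$, and this holds for every $k\geq 1$ precisely because $\delta>d(1-1/q)+\varepsilon$.

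There is really no obstacle here: the entire argument is a one-line extraction of the polynomial weight from each annulus, and the strict inequality $\delta>d(1-1/q)+\varepsilon$ is tailored exactly so that the decay $2^{-k\delta}$ supplied by the weight beats the product $|2^kB|^{1-1/q}2^{-\varepsilon k}$ of the allowed molecular size and decay, with a constant $C$ depending only on $d$, $q$, $\delta$, and $\varepsilon$.
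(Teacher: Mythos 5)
Your proof is correct, and since the paper states this lemma without proof (``The following lemma is easy to prove''), your direct verification --- bounding the weight below by $1$ on $B$ and by $2^{(k-1)\delta}$ on each annulus $2^kB\setminus 2^{k-1}B$, then using $\delta>d(1-\frac1q)+\varepsilon$ to absorb $|2^kB|^{1-1/q}2^{\varepsilon k}$ --- is precisely the intended argument.
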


In order to prove Theorem \ref{main} we shall use  general results about Hardy spaces associated with Schr\"odinger operators with non-negative potentials which were proved in \cite{DZ1}. Let $\{T_t\}_{t>0}$ be a semigroup of linear operators generated by a
Schr\"odinger operator $-\mathcal L=\Delta - \mathcal V(x)$ on $\mathbb R^d$, where $\mathcal V(x)$ is
a non-negative locally integrable potential. The Hardy space $H^1_{\mathcal L}$ is define by means of the maximal function, that is,
$$H^1_{\mathcal L}= \{ f\in L^1(\mathbb R^d): \| f\|_{H^1_{\mathcal L}}:=\| \sup_{t>0} |T_tf(x)|\|_{L^1(\mathbb R^d)}<\infty\}.$$
 We say that a function $\mathbf a$ is a generalized $(1,\infty,\mathcal L)$-atom for the Hardy space $H^1_{\mathcal L}$ if there is a ball $B=B(y_0,r)$ and a function $\mathbf b$ such that
 $$\text{supp}\, \mathbf b\subset B, \ \ \ \|\mathbf b\|_{L^\infty}\leq |B|^{-1},  \ \ \ \mathbf a=(I-T_{r^2}) \mathbf b.$$
  Then we say that $\mathbf a$ is associated with the ball $B(y_0,r)$.
It was proved in Section 6 of \cite{DZ1} that the space $H^1_{\mathcal L}$ admits atomic decomposition with the generalized $(1,\infty,\mathcal L)$-atoms, that is, $\| f\|_{H^1_{\mathcal L}} \sim \| f\|_{H^1_{\mathbf{ at},\infty, \mathcal L }}$, where  the norm $\| f\|_{H^1_{\mathbf{ at},\infty, \mathcal L }}$ is defined as in (\ref{eq3.08}) with $a_j(x)$ replaced by the general $(1,\infty,\mathcal L)$-atoms $\mathbf a_j(x)$.

\begin{lema}\label{lemma3.7} There is a constant $C>0$ such that for every $\mathbf a$ being a generalized $(1,\infty,\mathcal L)$ atom associated with $B(y_0,r)$ one has
$$|\mathcal L^{-1\slash 2} \mathbf a(y)|\leq Cr^{1-d}\Big(1+\frac{|y-y_0|}{r}\Big)^{-d}.$$
\end{lema}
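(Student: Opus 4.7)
The plan is to combine the semigroup definition of $\mathcal L^{-1/2}$ with the cancellation built into the generalized atom $\mathbf a = (I-T_{r^2})\mathbf b$, and then use the pointwise Gaussian upper bound $T_t(x,y)\leq P_t(x-y)$ to estimate. Writing $\mathbf a$ as above and using $\mathcal L^{-1/2} = c_1 \int_0^\infty T_t\,\frac{dt}{\sqrt t}$, I would compute
\begin{equation*}
\mathcal L^{-1/2} \mathbf a(y) = c_1\int_0^\infty \bigl(T_t\mathbf b(y) - T_{t+r^2}\mathbf b(y)\bigr)\frac{dt}{\sqrt t},
\end{equation*}
and, after the substitution $s=t+r^2$ in the second term, rearrange to
\begin{equation*}
\mathcal L^{-1/2}\mathbf a(y) = c_1\int_0^{r^2} T_t\mathbf b(y)\,\frac{dt}{\sqrt t} + c_1\int_{r^2}^\infty T_t\mathbf b(y)\Bigl(\frac{1}{\sqrt t}-\frac{1}{\sqrt{t-r^2}}\Bigr)\,dt.
\end{equation*}
This is the key identity: the first piece has a short time window, and the kernel difference in the second piece satisfies $|1/\sqrt t-1/\sqrt{t-r^2}|\le 1/\sqrt{t-r^2}$ on $[r^2,2r^2]$ and $\lesssim r^2 t^{-3/2}$ on $[2r^2,\infty)$.

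For the pointwise estimate on $|T_t\mathbf b(y)|$ I would use $0\le T_t(x,y)\le P_t(x-y)$ together with $\|\mathbf b\|_\infty\le|B|^{-1}$ and $\supp \mathbf b\subset B(y_0,r)$. This yields the general bound $|T_t\mathbf b(y)|\le C t^{-d/2}$, and, when $|y-y_0|\ge 2r$, the improved bound $|T_t\mathbf b(y)|\le C t^{-d/2} e^{-c|y-y_0|^2/t}$, since then $|y-x|\ge |y-y_0|/2$ for all $x\in B(y_0,r)$.

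The remainder of the proof is then two routine case analyses. In the local case $|y-y_0|\le 2r$ I use $|T_t\mathbf b(y)|\le Cr^{-d}$ for $t\le r^2$ and $|T_t\mathbf b(y)|\le Ct^{-d/2}$ otherwise; the three integrals (over $[0,r^2]$, $[r^2,2r^2]$, $[2r^2,\infty)$) each evaluate to a multiple of $r^{1-d}$. In the far case $\rho:=|y-y_0|\ge 2r$, the Gaussian factor $e^{-c\rho^2/t}$ controls everything: via the substitution $u=\rho^2/t$ the integral over $[0,r^2]$ becomes $\rho^{1-d}\int_{\rho^2/r^2}^\infty u^{(d-3)/2}e^{-cu}du$, which decays superpolynomially in $\rho/r$; the integral over $[r^2,2r^2]$ is bounded by $C r^{1-d}e^{-c\rho^2/r^2}$; and the integral over $[2r^2,\infty)$ is bounded by $Cr^2\rho^{-d-1}$. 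All three are dominated by $C r\rho^{-d}\sim Cr^{1-d}(1+\rho/r)^{-d}$, which is the desired bound. I do not anticipate a real obstacle here; the only mildly delicate point is handling the integrable singularity of $1/\sqrt{t-r^2}$ near $t=r^2$, which is resolved by the $[r^2,2r^2]$ split above.
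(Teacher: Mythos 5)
Your argument is correct, but it takes a genuinely different route from the paper. The paper's proof is a one-line appeal to functional calculus: it writes $\mathcal L^{-1/2}\mathbf a=m_{(r)}(\mathcal L)\mathbf b$ with $m_{(r)}(\lambda)=r(r^2\lambda)^{-1/2}(e^{-r^2\lambda}-1)$ and invokes Hebisch's multiplier theorem \cite{Hebisch} to get the kernel bound $|m_{(r)}(x,y)|\leq Cr^{1-d}(1+|x-y|/r)^{-d}$, from which the lemma follows immediately using the size and support of $\mathbf b$. You instead prove the same bound by hand: the subordination formula plus the cancellation $\mathbf a=(I-T_{r^2})\mathbf b$ give your identity
\begin{equation*}
\mathcal L^{-1/2}\mathbf a(y)=c_1\int_0^{r^2}T_t\mathbf b(y)\,\frac{dt}{\sqrt t}+c_1\int_{r^2}^\infty T_t\mathbf b(y)\Bigl(\frac{1}{\sqrt t}-\frac{1}{\sqrt{t-r^2}}\Bigr)dt,
\end{equation*}
and the Feynman--Kac upper bound $0\le T_t(x,y)\le P_t(x-y)$ (valid for any non-negative locally integrable $\mathcal V$) together with your two case analyses delivers exactly $Cr\rho^{-d}$ in the far regime and $Cr^{1-d}$ locally; I checked the three integrals in each case and they all close. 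What your approach buys is self-containedness and elementarity --- no multiplier theorem, only the Gaussian upper bound, so it would also transfer verbatim to any semigroup dominated by the heat semigroup. What the paper's approach buys is brevity and a stronger intermediate statement (a genuine pointwise kernel bound for $m_{(r)}(\mathcal L)$, uniform in $r$), at the cost of citing \cite{Hebisch} as a black box. Either proof is acceptable; the only point worth making explicit in a final write-up is the justification for splitting the absolutely convergent integral into the two pieces of your key identity, which is immediate from $|T_t\mathbf b(y)|\le C\min(r^{-d},t^{-d/2})$.
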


\begin{proof} The proof follows from functional calculi  (see, e.g., \cite{Hebisch}).  Note that  $\mathcal L^{-1\slash 2} \mathbf a=m_{(r)}(\mathcal L)\mathbf b$ with $m_{(r)}(\lambda)=r(r^2\lambda)^{-1\slash 2} (e^{-r^2\lambda} -1)$ and  $\mathbf b$ such that $\text{supp}\, \mathbf b\subset B(y_0,r)$, $\| \mathbf b  \|_{L^\infty}\leq |B(y_0,r)|^{-1}$. From \cite{Hebisch} we conclude that there is a constant $C>0$ such that for every $r>0$ one has
$$ m_{(r)}(\mathcal L) f(x)=\int_{\mathbb R^d} m_{(r)}(x,y)f(y)\, dy,$$
with $m_{(r)}(x,y)$ satisfying
\begin{equation}\label{mmm}
 |m_{(r)}(x,y)|\leq Cr^{1-d} \Big(1+\frac{|x-y|}{r}\Big)^{-d}.
\end{equation}
Now the lemma can be easily deduced from (\ref{mmm})
and the size and support property of $\mathbf b$. 
\end{proof}

\section{Proof of Theorem \ref{main}}

For  real numbers $n>2$, $\beta>0$ let
$$g(x)=(1+|x|)^{-n-\beta}, \ \ \ g_s(x)=s^{-n\slash 2} g\big(\frac{x}{ \sqrt{s}}\big).$$ One can easily check that
\begin{equation}\label{eq41}
 \int_0^t g_s(x)\, ds \leq
 C |x|^{2-n}\Big(1+\frac{|x|}{\sqrt{t}}\Big)^{-2-\beta};
\end{equation}
\begin{equation}\label{eq4.2}
 \int_{r^2}^{\infty } g_s(x)\, ds  \leq
 C r^{2-n} \Big(1+\frac{|x|}{r}\Big)^{-n+2} \ \ \text{for} \ r>0.
\end{equation}
Moreover, it is easily to verify that for $1<q<\infty$, $d(1-\frac{1}{q})<\alpha\leq d, \ \beta>0$ one has
\begin{equation}\label{eq4.4}
  \Big\| |x|^{\alpha-d} \Big(1+\frac{|x|}{\sqrt{t}}\Big)^{-d-\beta}\Big\|_{L^q(\mathbb R^d, \,dx)}= C_{\alpha ,\beta} t^{(\alpha-d+d\slash q)\slash 2}
\end{equation}
and
\begin{equation}\label{eq4.5}
 \int |z-y|^{2-d} \Big(1+\frac{|z-y|}{r}\Big)^{-\beta} \Big(1+\frac{|y|}{r}\Big)^{-d+\gamma} \, dy\leq Cr^2 \Big(1+\frac{|z|}{r}\Big)^{-d+\gamma+2-\beta}
\end{equation}
for $0<\gamma <\beta<  2$ .

\begin{lema} Assume that $V(x)$ satisfies the assumptions of Theorem \ref{main}. Then for $0<\gamma\leq 2$ and $r>0$ one has
\begin{equation}\label{eq4.3}
 \int_{\mathbb R^d} V(z) \Big(1+\frac{|z-y|}{r}\Big)^{-d+\gamma} \, dz \leq  c_d^{-1} r^{d-2}\| \Delta^{-1}V\|_{L^\infty}.
\end{equation}
\end{lema}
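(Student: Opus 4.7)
The plan is to reduce the claim to the pointwise inequality
\begin{equation*}
\Big(1+\frac{|z-y|}{r}\Big)^{-d+\gamma} \;\leq\; r^{d-2}\,|z-y|^{-d+2},
\end{equation*}
valid for all $z,y\in\mathbb R^d$, $r>0$, and $0<\gamma\leq 2$. Once this is in hand, multiplying by $V(z)\geq 0$ and integrating gives, by the definition of $\Gamma_0$ and the formula for $\Delta^{-1}V$ at the top of Section~2,
\begin{equation*}
\int V(z)\Big(1+\tfrac{|z-y|}{r}\Big)^{-d+\gamma}\,dz
\leq r^{d-2}\int \frac{V(z)}{|z-y|^{d-2}}\,dz
= c_d^{-1}r^{d-2}\,|\Delta^{-1}V(y)|
\leq c_d^{-1}r^{d-2}\|\Delta^{-1}V\|_{L^\infty},
\end{equation*}
which is exactly the asserted bound.

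To prove the pointwise inequality I would split into two cases. If $|z-y|\leq r$, then the left-hand side is $\leq 1$ (since the exponent $-d+\gamma$ is negative, as $\gamma\leq 2<d$), while the right-hand side $r^{d-2}|z-y|^{-(d-2)} \geq 1$, so the inequality holds trivially. If $|z-y|>r$, then $1+|z-y|/r \geq |z-y|/r$, and since $-d+\gamma<0$ we obtain
\begin{equation*}
\Big(1+\frac{|z-y|}{r}\Big)^{-d+\gamma}\leq \Big(\frac{|z-y|}{r}\Big)^{-d+\gamma}
= r^{d-\gamma}|z-y|^{\gamma - d}.
\end{equation*}
Comparing with $r^{d-2}|z-y|^{2-d}$, the desired inequality reduces to $r^{2-\gamma}\leq |z-y|^{2-\gamma}$, which holds because $|z-y|\geq r$ and the exponent $2-\gamma$ is nonnegative (using here the assumption $\gamma\leq 2$).

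There is no serious obstacle: the only subtlety is the case analysis, and in particular the use of $\gamma\leq 2$ in the second case to ensure the exponent $2-\gamma$ has the correct sign. The assumption $\gamma>0$ is not needed for this argument, but is consistent with the role this lemma will play in controlling tail decay of the molecules introduced just before the statement.
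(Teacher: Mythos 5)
Your proof is correct and is essentially the paper's own argument: the authors likewise split into $|z-y|\leq r$ and $|z-y|>r$ and bound the weight in both regions by $(r/|z-y|)^{d-2}$, reducing the claim to $\int V(z)|z-y|^{2-d}\,dz\leq c_d^{-1}\|\Delta^{-1}V\|_{L^\infty}$. Your write-up merely makes the pointwise comparison and the role of $\gamma\leq 2$ more explicit.
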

\begin{proof} The left-hand side of (\ref{eq4.3}) is bounded by
\begin{equation}\nonumber
\begin{split}
 \int_{|z-y|\leq r} V(z)\Big(\frac{r}{|z-y|}\Big)^{d-2} \, dz & + \int_{|z-y|>r} V(z) \Big(\frac{|z-y|}{r}\Big)^{-d+2}\, dz\\
 &\leq c_d^{-1} r^{d-2}\| \Delta^{-1}V\|_{L^\infty}. \\
\end{split}
\end{equation} 
\end{proof}

\begin{proof}[Proof of Theorem \ref{main}] We already have known that the operators $(-\Delta)^{1\slash 2}L^{-1\slash 2}$ and $L^{1\slash 2}(-\Delta)^{-1\slash 2}$ are bounded on $L^1(\mathbb R^d)$.
 It suffices to prove (\ref{eq1.6}) and (\ref{eq1.7}).
 Set $\gamma=\frac{1}{10}$ and fix $q>1$ and $\varepsilon>0$ such that $\gamma>d(1-\frac{1}{q})+\varepsilon$.
 Set $w_0(x)\equiv 1$. According to the atomic and molecular decompositions (see Section 3) the proof of (\ref{eq1.6}) will be done if we verify that
$(-\Delta)^{1\slash 2} L^{-1\slash 2} \mathbf a$ is a multiple of a $(1,q,\varepsilon, w_0)$-molecule for every generalized $(1,\infty, L)$-atom $\mathbf a$ with a multiple  constant independent of $\mathbf a$. Identical arguments can be then applied to show that $L^{1\slash 2} (-\Delta)^{-1\slash 2}  \mathbf a$ is a $(1,q,\varepsilon, w)$-molecule for $\mathbf a$ being a generalized atom for the classical Hardy space $H^1(\mathbb R^d)=H^1_{-\Delta}$ with a multiple constant independent of $\mathbf a$.

Let $\mathbf a=(I-K_{r^2})\mathbf b$ be a generalized $(1,\infty, L)$-atom for $H^1_L$ associated with $B(y_0,r)$. By Lemma \ref{lemma3}, since $\int w(x)\mathbf a(x)\, dx=0$, we have that
$$\int (-\Delta)^{1\slash 2} L^{-1\slash 2} \mathbf a(x)\, dx =0.$$
 Set
\begin{equation}\begin{split}
J(x) &= \int_0^\infty \int_0^t \iint P_{t-s}(x-z)V(z)K_s(z,y) (L^{-1\slash 2}\mathbf a)(y) \, dy\, dz\, ds \frac{dt}{t^{3\slash 2}}\\
&= \int_0^{r^2 } \int_0^{t} \iint ... + \int_{r^2 }^\infty \int_0^{t\slash 2} \iint
+ \int_{r^2}^{\infty}  \int_{t\slash 2}^t \iint... \\
&=J_1(x)+J_2(x) +J_3(x).
\end{split}
\end{equation}
Thanks to (\ref{eq2.8}) and  Lemma \ref{lemmaLq}  it suffices to show that there is a constant  $C_q>0$,  independent of $\mathbf a(x)$ such that
\begin{equation}\label{eq4.7}
\Big\|\Big(1+\frac{|x-y_0|}{r}\Big)^\gamma J(x)\Big\|_{L^q(\mathbb R^d)}\leq C_qr^{-d+d\slash q}.
\end{equation}

Applying Lemma \ref{lemma3.7} and (\ref{eq41}) with $n=d+1$,  we obtain
\begin{equation}\begin{split}
|J_1(x)| &= \Big|\int_0^{r^2} \int_0^t \iint P_{t-s}(x-z)V(z)K_s(z,y) (L^{-1\slash 2}\mathbf a)(y) \, dy\, dz\, ds \frac{dt}{t^{3\slash 2}}\Big|\\
&\leq C \int_0^{r^2} \int_0^t \int P_{t-s}(x-z)V(z) r^{1-d} \Big(1+\frac{|z-y_0|}{r}\Big)^{-d} \,  dz\, ds \frac{dt}{t^{3\slash 2}}\\
&\leq C \int_0^{r^2}  \int P_{s}(x-z)V(z) r^{1-d}\Big(1+\frac{|z-y_0|}{r}\Big)^{-d} \,  dz\, \frac{ds}{\sqrt{s}} \\
&\leq C_N  \int |x-z|^{1-d}\Big(1+\frac{|x-z|}{r}\Big)^{-N}V(z) r^{1-d}\Big(1+\frac{|z-y_0|}{r}\Big)^{-d} \,  dz. \\
\end{split}
\end{equation}
Consequently,
\begin{equation}\begin{split}
&|J_1(x)| \Big(1+\frac{|x-y_0|}{r}\Big)^\gamma \\
&\leq C_N r^{1-d} \int |x-z|^{-d+1}\Big(1+\frac{|x-z|}{r}\Big)^{-N+\gamma}V(z) \Big(1+\frac{|z-y_0|}{r}\Big)^{-d+\gamma} \,  dz.\\
\end{split}
\end{equation}
  Therefore, using the Minkowski integral inequality together with (\ref{eq4.4}) and (\ref{eq4.3}), we get
  \begin{equation}\label{eq4.10}
   \Big\| J_1(x) \Big(1+\frac{|x-y_0|}{r}\Big)^\gamma\Big\|_{L^q(dx)} \leq Cr^{-d+d\slash q}.
  \end{equation}

In order to estimate $J_2(x)$ we use Lemma \ref{lemma3.7} and (\ref{eq41}) with $n=d$  to obtain

\begin{equation}\begin{split}
& |J_{2}(x)| \Big(1+\frac{|x-y_0|}{r}\Big)^\gamma \\
 & \leq C \int_{r^2}^{\infty}  \Big(1+\frac{|x-y_0|}{r}\Big)^\gamma \int_0^{t\slash 2} \iint
 t^{-d\slash 2}e^{-c|x-z|^2\slash t} V(z)\\
 &\ \ \ \times K_s(z,y) r^{1-d} \Big(1+\frac{|y-y_0|}{r}\Big)^{-d} \, dy \,  dz\, ds \frac{dt}{t^{3\slash 2}}\\
&\leq C \int_{r^2}^\infty \iint t^{(2\gamma-d-3)\slash 2}e^{-c|x-z|^2\slash t}  V(z) \\
&\ \ \ \times |z-y|^{2-d}\Big(1+\frac{|z-y|}{\sqrt{t}}\Big)^{-N+\gamma}r^{1-d-2\gamma } \Big(1+\frac{|y-y_0|}{r}\Big)^{-d+\gamma}\, dy\, dz\, dt . \\
\end{split}
\end{equation}
Setting $N=\beta+\gamma$ with $0<\gamma <\beta<  2$ and applying the Minkowski integral inequality together with  (\ref{eq4.5}) and (\ref{eq4.3}) we conclude that
\begin{equation}\begin{split}\label{eq4.12}
& \Big\| J_{2}(x) \Big(1+\frac{|x-y_0|}{r}\Big)^\gamma\Big\|_{L^q(dx)} \\
 & \leq C \int_{r^2}^{\infty}\iint  t^{-(d+3-2\gamma-d\slash q)\slash 2}  V(z)\\
 &\ \ \ \times  |z-y|^{2-d}
   \Big(1+\frac{|z-y|}{\sqrt{t}}\Big)^{-\beta }r^{1-d-2\gamma } \Big(1+\frac{|y-y_0|}{r}\Big)^{-d+\gamma}\, dy\, dz\, dt \\
   & \leq C \int_{r^2}^{\infty}\iint  t^{-(d+3-2\gamma-d\slash q)\slash 2}  V(z)\\
 &\ \ \ \times  |z-y|^{2-d}
   \Big(1+\frac{|z-y|}{r}\Big)^{-\beta }\Big(\frac{\sqrt{t}}{r}\Big)^\beta r^{1-d-2\gamma } \Big(1+\frac{|y-y_0|}{r}\Big)^{-d+\gamma}\, dy\, dz\, dt \\
 &\leq C \int r^{-2d +2+d\slash q} V(z)\Big(1+\frac{|z-y_0|}{r}\Big)^{-d+2+ \gamma -\beta}\, dz \\
 &\leq C r^{-d+d\slash q}.
\end{split}
\end{equation}
By Lemma \ref{lemma3.7} and (\ref{eq41}) with $n=d$,  we have
\begin{equation}\begin{split}\nonumber
&|J_{3}(x)|\\
&\leq C \int_{r^2}^\infty \int_{\frac{t}{ 2}}^{t}\iint P_{t-s} (x-z) V(z) t^{-\frac{d}{2}} e^{-\frac{c|z-y|^2}{t}} \\
& \hskip2.5cm \times \Big(1+\frac{|y-y_0|}{r}\Big)^{-d}r^{1-d}\, dy\, dz\, ds\, \frac{dt}{t^{\frac{3} {2}}}\\
&\leq C_N\int_{r^2}^\infty
\iint  |x-z|^{2-d} \Big(1+\frac{|x-z|}{\sqrt{t}}\Big)^{-N} V(z)\\
&\ \ \ \times
t^{-\frac{d}{2}} e^{-c|z-y|^2\slash t}
\Big(1+\frac{|y-y_0|}{r}\Big)^{-d}r^{1-d}\, dy\, dz\, ds\, \frac{dt}{t^{3\slash 2}}.
\end{split}\end{equation}
Hence,
\begin{equation}\begin{split}\nonumber
&|J_{3}(x)|\Big(1+\frac{|x-y_0|}{r}\Big)^\gamma\\
&\leq C\int_{r^2}^\infty
\iint  |x-z|^{2-d} \Big(1+\frac{|x-z|}{\sqrt{t}}\Big)^{-N+\gamma}t^\gamma  V(z)\\
&\ \ \ \times
t^{-\frac{d}{2}} e^{-c'|z-y|^2\slash t}
\Big(1+\frac{|y-y_0|}{r}\Big)^{-d+\gamma}r^{1-d-2\gamma}\, dy\, dz\, ds\, \frac{dt}{t^{3\slash 2}}.
\end{split}\end{equation}
By Minkowski's integral inequality combined with (\ref{eq4.4})  we arrive to
\begin{equation}
\begin{split}\nonumber
&\Big\|J_3(x)\Big(1+\frac{|x-y_0|}{r}\Big)^\gamma\Big\|_{L^q(dx)} \\
 &\leq \int_{r^2}^\infty \iint t^{(-d +2+d\slash q)\slash 2  +\gamma -3\slash 2} V(z) \\
 &\hskip1.5cm \times
t^{-d\slash 2} e^{-c'|z-y|^2\slash t}
\Big(1+\frac{|y-y_0|}{r}\Big)^{-d+\gamma}r^{1-d-2\gamma}\, dy\, dz\,  dt.\\
\end{split}\end{equation}
Application of  (\ref{eq4.2}) with $n=2d+1-\frac{d}{q}-2\gamma$ and then  (\ref{eq4.3}) yields
\begin{equation}
\begin{split}\nonumber
&\Big\|J_3(x)\Big(1+\frac{|x-y_0|}{r}\Big)^\gamma\Big\|_{L^q(dx)} \\
&\leq C \iint r^{2-3 d +d\slash q}V(z)\Big(1+\frac{|z-y|}{r}\Big)^{-2d+1+d\slash q +2\gamma}
\Big(1+\frac{|y-y_0|}{r}\Big)^{-d+\gamma}\, dy\, dz \\
&\leq \int r^{2-2d+d\slash q} V(z)\Big(1+\frac{|z-y_0|}{r}\Big)^{-2d+1+d\slash q +3\gamma}\, dz \\
&\leq Cr^{-d+d\slash q}.
\end{split}\end{equation}
The above inequality together with (\ref{eq4.10}) and (\ref{eq4.12}) gives desired  (\ref{eq4.7}) and, consequently, the proof of (\ref{eq1.6}) is complete.

Let us note that in the proof (\ref{eq1.6}) we use only Lemmas \ref{lemma3}, \ref{lemma3.7}, and the upper Gaussian bounds for the kernels. The proof of (\ref{eq1.7}) goes identically to that of (\ref{eq1.6}) by replacing Lemma \ref{lemma3} by Lemma \ref{lemma2}. 
\end{proof}
\section{Proof of the Riesz transform characterization of $H^1_L$}

\begin{proof}[Proof of Corollary \ref{corollary}]
 Assume that $f\in H^1_L$.  Then, thanks to Theorem \ref{main},
   there is $g\in H^1(\mathbb R^d)$ such that  $f=L^{1\slash 2} (-\Delta)^{-1\slash 2} g$. By the characterization of the classical Hardy space $H^1(\mathbb R^d)$ by the Riesz transforms  we have
  \begin{equation}\label{eq66}
  \frac{\partial }{\partial x_j } L^{-1\slash 2} f =\frac{\partial}{\partial x_j} L^{-1\slash 2} L^{1\slash 2} (-\Delta)^{-1\slash 2} g=\frac{\partial}{\partial x_j} (-\Delta)^{-1\slash 2} g\in L^1(\mathbb R^d).
  \end{equation}
  Conversely, assume that for $f\in L^1(\mathbb R^d)$ we have $\frac{\partial }{\partial x_j} L^{-1\slash 2}f\in L^1(\mathbb R^d)$ for $j=1,2,...,d$. Set $g=(-\Delta)^{1\slash 2} L^{-1\slash 2} f$. Then by Lemma \ref{lemma1}, $g\in L^1(\mathbb R^d)$ and
  \begin{equation}\label{eq666}
  \frac{\partial}{\partial x_j} (-\Delta)^{-1\slash 2} g= \frac{\partial}{\partial x_j} (-\Delta)^{-1\slash 2} (-\Delta)^{1\slash 2} L^{-1\slash 2} f= \frac{\partial}{\partial x_j} L^{-1\slash 2} f\in L^1(\mathbb R^d),
  \end{equation}
  which implies that $ g\in H^1(\mathbb R^d)$. Consequently, by Theorem \ref{main}, $f\in H^1_L$. Finally (\ref{eq333}) can be deduced from (\ref{eq66}), (\ref{eq666}), and Theorem \ref{main}.  
\end{proof}


\begin{thebibliography}{}


\baselineskip=6mm

\bibitem{ADM}   Auscher, P.,  Duong, X.T.,  McIntosh,  A.: { Boundedness of Banach space valued
singular integral operators and Hardy spaces,}  Unpublished preprint (2005)

\bibitem{BZ}
  Bernicot, F.,  Zhao,  J.: { New abstract Hardy spaces}, J. Funct. Anal., 255,
1761--1796  (2008)

\bibitem{C}  Coifman, R.: { A real variable characterization of $H^p$}, Studia Math., 51,  269--274 (1974)
\bibitem{CZ}  Czaja, W.,   Zienkiewicz, J.: { Atomic characterization of the
 Hardy space $H\sp 1\sb L(\mathbb R)$ of one-dimensional Schr\"odinger
 operators with nonnegative potentials,} Proc. Amer. Math. Soc. 136, no. 1,
 89--94  (2008)

\bibitem{DY}
 Duong,  X.T.,   Yan, L.X.: {Duality of Hardy and BMO spaces associated with
operators with heat kernel bounds}, J. Amer. Math. Soc., 18,  943--973 (2005)

\bibitem{DGMTZ} Dziuba\'nski, J.,  Garrig\'os, G.,  Mart\'inez, T.,  Torrea, J.L.,   Zienkiewicz, J.:
{ BMO spaces related to Schr\"odinger operators with potentials satisfying a reverse H\"older
inequality}, Math. Z., 249, 329--356  (2005).

\bibitem{DZ}  Dziuba\'nski, J.,  Zienkiewicz, J.: {Hardy space $H^1$ associated to Schr\"odinger operator
satisfying reverse H\"older inequality,} Rev. Mat. Iberoamericana, 15,  279--296 (1999)



\bibitem {DZ5}  Dziuba\'nski, J.,   Zienkiewicz, J.: { Hardy spaces $H\sp 1$ for
Schr\"odinger operators with certain potentials,} Studia Math., 164,
  39--53  (2004)

\bibitem{DZ1}  Dziuba\'nski, J.,  Zienkiewicz, J.: { On Hardy spaces associated with certain
Schr\"odinger operators in dimension 2}, Rev. Mat. Iberoam., 28, no. 4, 1035--1060 (2012)


\bibitem{DZ4}  Dziuba\'nski, J.,   Zienkiewicz, J.: { On Isomorphisms of Hardy Spaces Associated with Schr\"odinger Operators}, J. Fourier Anal. Appl., 19,  447--456  (2013)


\bibitem{Hebisch}  Hebisch, W.: { A multiplier theorem for Schr\"odinger operators}, Colloq. Math., 60/61,  659--664   (1990)

\bibitem{Hof}  Hofmann, S.,  Lu, G.Z.,  Mitrea, D.,   Mitrea, M., Yan, L.X.:
{ Hardy spaces associated with non-negative self-adjoint
operators satisfying Davies-Gafney estimates}, Mem. Amer. Math. Soc., 214, no. 1007 (2011)


\bibitem{Later}  Latter, R.H.: { A decomposition of $H^p(\mathbb R^n)$ in terms of atoms},
Studia Math., 62 no. 1, 93--101  (1978)

\bibitem{Se}  Semenov, Yu.A.: {Stability of $L^p$-spectrum of generalized Schr\"odinger operators and equivalence of Green's functions}, Internat. Math. Res. Notices,   12, 573--593   (1997).

\bibitem{Stein}  Stein, E.: {Harmonic Analysis: Real-Variable
Methods, Orthogonality, and Oscillatory Integrals}, Princeton
University Press, Princeton, NJ, (1993)


\end{thebibliography}


\end{document}